 \theoremstyle{definition}
 \newtheorem{defn}{\protect\definitionname}
  \theoremstyle{plain}
  \newtheorem{prop}{\protect\propositionname}
  \theoremstyle{plain}
  \newtheorem{lem}{\protect\lemmaname}
  \theoremstyle{remark}
  \newtheorem{rem}{\protect\remarkname}
  \theoremstyle{plain}
  \newtheorem{thm}{\protect\theoremname}
  \theoremstyle{plain}
 \newtheorem{question}{\protect\questionname}
 \theoremstyle{plain}
 \newtheorem{cor}{\protect\corollaryname}
  \providecommand{\definitionname}{Definition}
   \providecommand{\theoremname}{Theorem}
  \providecommand{\lemmaname}{Lemma}
  \providecommand{\questionname}{Question}
  \providecommand{\propositionname}{Proposition}
  \providecommand{\remarkname}{Remark}
  \providecommand{\corollaryname}{Corolary}
\begin{document}
\global\long\def\norm#1{\left\Vert #1\right\Vert }
 \global\long\def\Lip{\operatorname{Lip}}
 \global\long\def\const{\operatorname{const}}
 \global\long\def\ZZ{\mathbb{Z}}
 \global\long\def\RR{\mathbb{R}}

\title{Bi-Lipschitz bijections of $\ZZ$}

\author{Itai Benjamini and Alexander Shamov}
\date{February 4, 2014}
\maketitle
\begin{abstract}
It is shown that
every bi-Lipschitz bijection from $\ZZ$ to itself is at a bounded $L_{\infty}$ distance from either  the identity or
the reflection.  We then comment on  the group-theoretic properties of the action of bi-Lipschitz bijections.
\end{abstract}

\section{Introduction}

\begin{defn}
A bi-Lipschitz bijection between two metric spaces $\left(X,\rho_{X}\right)$
and $\left(Y,\rho_{Y}\right)$ is a bijective map $f:X\to Y$, such
that there are $0<C_{1}\le C_{2}<+\infty$, such that for all $x_{1},x_{2}\in X$
\[
C_{1}\rho_{X}\left(x_{1},x_{2}\right)\le\rho_{Y}\left(f\left(x_{1}\right),f\left(x_{2}\right)\right)\le C_{2}\rho_{X}\left(x_{1},x_{2}\right).
\]

\end{defn}
Recall the definition of the Lipschitz  constant of a map:
\[
\norm f_{\Lip}:=\sup_{x_{1}\neq x_{2}}\frac{\rho_{Y}\left(f\left(x_{1}\right),f\left(x_{2}\right)\right)}{\rho_{X}\left(x_{1},x_{2}\right)}.
\]

A map $f$ is Lipschitz if and only if $\norm f_{\Lip}$ is finite,
and bi-Lipschitz if and only if it is bijective and both $\norm f_{\Lip}$ and
$\norm{f^{-1}}_{\Lip}$ are finite.

While the real line  $\RR$ admits a large family of bi-Lipschitz bijections, e.g. including any increasing function with derivative bounded away from $0$
and $\infty$,   bi-Lipschitz bijections of  $\ZZ$ turn out to be much more rigid. Namely, we have

\begin{thm} \label{thm:main}
Let $f:\ZZ\to\ZZ$ be a bi-Lipschitz bijection ($\ZZ$
is equipped with its usual metric, namely $\rho\left(x,y\right):=\left|x-y\right|$).
Then either
\[
\sup_{x\in\ZZ}\left|f\left(x\right)-x\right|<+\infty
\]
or
\[
\sup_{x\in\ZZ}\left|f\left(x\right)+x\right|<+\infty.
\]
More precisely,
\[
f\left(x\right)=\pm x+\const+r\left(x\right),
\]
\[
\left|r\left(x\right)\right|\le\norm f_{\Lip}\norm{f^{-1}}_{\Lip}.
\]

\end{thm}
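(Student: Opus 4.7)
Write $L := \norm{f}_{\Lip}$ and $M := \norm{f^{-1}}_{\Lip}$. First I establish properness: the bound $|n - f^{-1}(0)| \le M|f(n)|$ from $f^{-1}$ being $M$-Lipschitz shows that every bounded subset of $\ZZ$ has finite preimage, so $|f(n)| \to \infty$ as $|n| \to \infty$. Next, I would derive a sign dichotomy at $\pm\infty$: either (a) $f(n) \to +\infty$ as $n \to +\infty$ and $f(n) \to -\infty$ as $n \to -\infty$, or (b) the two limits are swapped. Indeed $|f(n+1)-f(n)| \le L$, so if $f$ took arbitrarily large values of both signs along a single tail, a discrete intermediate-value argument would produce $n_k \to \infty$ with $|f(n_k)| \le L$, contradicting properness. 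Hence $f$ has a fixed sign on each tail; and these signs must differ, else the image would omit an entire half-line of $\ZZ$, contradicting bijectivity.

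Case (b) reduces to case (a) via $x \mapsto -x$, so I work in case (a). Subtracting $c := f(0)$ I may assume $f(0) = 0$; the goal is $|f(n) - n| \le LM$ for every $n \in \ZZ$. Fix $n > 0$ (the case $n < 0$ is symmetric). Comparing the intervals $[0, n]$ and $[0, f(n)]$ via the bijection $f$, and computing $|f([0,n]) \cap [0, f(n)]| = |[0,n] \cap f^{-1}([0, f(n)])|$ in two ways, inclusion--exclusion yields the identity
\[
f(n) - n = \bigl|\{y \in [0, f(n)] : f^{-1}(y) \notin [0, n]\}\bigr| - \bigl|\{k \in [0, n] : f(k) \notin [0, f(n)]\}\bigr|.
\]
Each element on the right is a bi-Lipschitz \emph{violation}. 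For instance, if $k \in [0,n]$ has $f(k) > f(n)$, then $n - k \le M(f(k) - f(n))$ (from $f^{-1}$ being $M$-Lipschitz) and $f(k) - f(n) \le L(n - k)$ (from $f$ being $L$-Lipschitz); and in Case (a) the ``global'' exception sets $\{k \ge 0 : f(k) < 0\}$ and $\{k < 0 : f(k) \ge 0\}$ are finite.

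The main obstacle is the final quantitative step: extracting the sharp constant $LM$. Individually the violation sets above can grow with $n$, so the crucial input is the bijective pairing under $f$ between source-side overshoots ($k \le n$ with $f(k) > f(n)$) and target-side undershoots ($k > n$ with $f(k) \in [0, f(n)]$), which produces cancellation rather than accumulation when the signed difference is evaluated. The residual imbalance is then pinned down by the product $LM$, using both Lipschitz constants simultaneously. Steps 1 and 2 are essentially soft, relying only on the metric and bijective structure of $\ZZ$; Step 3 is where the bi-Lipschitz hypothesis enters quantitatively.
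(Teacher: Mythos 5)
Your preliminaries (properness, the sign dichotomy on the two tails, the reduction to the orientation-preserving case with $f(0)=0$) are fine, and your counting identity $f(n)-n = B - A$, with $B=\bigl|\{y\in[0,f(n)]:f^{-1}(y)\notin[0,n]\}\bigr|$ and $A=\bigl|\{k\in[0,n]:f(k)\notin[0,f(n)]\}\bigr|$, is correct. But the proof stops exactly where the theorem lives. The identity holds for \emph{every} bijection of $\ZZ$ with the right behaviour at infinity, so by itself it carries no information: $B-A$ is simply $f(n)-n$, and bounding it is the whole problem. Your Step 3 asserts that a ``bijective pairing between source-side overshoots and target-side undershoots produces cancellation'' and that ``the residual imbalance is then pinned down by $LM$,'' but no such pairing is constructed and no inequality is derived. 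The individual estimates you do write down ($n-k\le M(f(k)-f(n))$ and $f(k)-f(n)\le L(n-k)$ for an overshoot $k$) combine only to the vacuous $1\le LM$; they do not bound the \emph{number} of overshoots, and indeed the set $\{k>n: 0\le f(k)\le f(n)\}$ could a priori have as many as $Mf(n)$ elements. Likewise, finiteness of the global exception sets $\{k\ge0:f(k)<0\}$ gives no quantitative bound, so even the qualitative conclusion $\sup_x|f(x)-x|<\infty$ does not follow from what is written.

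The missing idea is the paper's key lemma: for every $x$, the characteristic function of $f\bigl((-\infty,x]\bigr)$ is constant outside the window $[f(x)-C/2,\,f(x)+C/2]$ with $C=LM$, so this image set is a half-line up to a bounded perturbation. The proof is short but is precisely the step you skipped: if $y\in f((-\infty,x])$ while $y+1\notin f((-\infty,x])$, then $f^{-1}(y)<x<f^{-1}(y+1)$, and the $L$-Lipschitz bound applied to the pairs $(x,f^{-1}(y))$ and $(x,f^{-1}(y+1))$ together with the $M$-Lipschitz bound on $f^{-1}$ at $y,y+1$ forces $|f(x)-y-\tfrac12|\le C/2$. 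This is what shows your violation sets $A$ and $B$ are confined to a window of width $C$ around $f(n)$ (hence have at most $O(C)$ elements each, by injectivity), after which the cardinality count gives $|f(x_2)-f(x_1)-(x_2-x_1)|\le C$ and the theorem. Without this lemma or an equivalent substitute, the ``cancellation'' you invoke is an unproven assertion, and the argument has a genuine gap at its only quantitative step.
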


This result extends to spaces that are bi-Lipschitz
isomorphic to $\ZZ$, like, for instance, products $\ZZ\times G$
with a finite graph $G$, equipped with the graph metric.
\medskip

The reason for different behavior of $\ZZ$ vs. $\RR$ is that
unlike  $\RR$,  $\ZZ$ cannot be {\it ``squeezed and stretched'' }.
In the proof below one of the arguments is a cardinality estimate.
It is quite obvious that this argument fails in the continuum,
and indeed for $\RR$ the statement is just wrong. However,
the analogy is restored if we equip our space with a measure and require
the bijection to be measure preserving. This motivates the
following
\begin{question}
Let $f:\ZZ^2 \to\ZZ^2$ be a bi-Lipschitz bijection.
Can it be extended to a bi-Lipschitz Lebesgue measure preserving bijection
$g:\RR^2 \to\RR^2$?
\end{question}

Note that the two dimensional grid $\ZZ^2$ admits many bi-Lipschitz bijections.
For example, let $g: \ZZ \to \ZZ$ be a Lipschitz function. Then $F(x,y) := (x, y+g(x))$ is a bi-Lipschitz bijection of
$\ZZ^2$. This shows that a naive generalization of Theorem \ref{thm:main} fails for $\ZZ^2$: not every bi-Lipschitz bijection is at
a bounded distance from an isometry.
\medskip

For background on metric geometry see e.g. \cite{BBI}. The group of bijections from $\ZZ$ to $\ZZ$ within a bounded
$L_\infty$ distance to the identity recently appeared in \cite{JM}.

\section{Proof of Theorem \ref{thm:main}}

The key to the result is to understand how the image sets $f\left(\left(-\infty,x\right]\right)$
may look like.
\[
\cdots\bullet\bullet\bullet\bullet\bullet\bullet\bullet\bullet\,\underset{\le\frac{1}{2}\norm f_{\Lip}\norm{f^{-1}}_{\Lip}}{\underleftrightarrow{\circ\bullet\bullet\circ\bullet\bullet\circ\overset{f\left(x\right)}{\circ\bullet\circ}\bullet\circ\circ\circ\circ\bullet\circ\,\bullet}}\circ\circ\circ\circ\circ\circ\circ\circ\cdots
\]
The ``picture'' above illustrates what we are going to prove.
$\circ$'s are used to denote $y\in\mathbb{Z}$ such that $y\notin f\left(\left(-\infty,x\right]\right)$,
and $\bullet$'s for $y\in f\left(\left(-\infty,x\right]\right)$.

In the sequel we denote the constant $\norm f_{\Lip}\norm{f^{-1}}_{\Lip}$
by $C$.
\begin{lem}  \label{lem:main}
One of the following two cases occurs: either
\[
\left(-\infty,\left\lfloor f\left(x\right)-C/2\right\rfloor \right]\subset f\left(\left(-\infty,x\right]\right)\subset\left(-\infty,\left\lfloor f\left(x\right)+C/2\right\rfloor \right]
\]
or
\[
\left[\left\lceil f\left(x\right)+C/2\right\rceil ,+\infty\right)\subset f\left(\left(-\infty,x\right]\right)\subset\left[\left\lceil f\left(x\right)-C/2\right\rceil ,+\infty\right).
\]
for all $x \in \ZZ$.
\end{lem}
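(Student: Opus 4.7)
Fix $x \in \ZZ$ and study which integers $y$ belong to $A_x := f((-\infty, x])$. The plan is to encode this information in a single auxiliary function $g : \ZZ \to \ZZ$ defined by
\[
g(k) := f^{-1}(f(x) + k) - x,
\]
so that $f(x) + k \in A_x$ iff $g(k) \le 0$. From the bi-Lipschitz hypothesis on $f$ I read off four properties: $g(0) = 0$, $g$ is injective (inherited from $f^{-1}$), $|g(k) - g(k+1)| \le \norm{f^{-1}}_{\Lip}$, and $|g(k)| \ge |k|/\norm f_{\Lip}$; in particular $g(k) \ne 0$ whenever $k \ne 0$.

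The core step will be to show that $g$ can only change sign in a bounded window around $0$. Suppose $k_0 \in \ZZ$ is a sign change, i.e.\ $g(k_0)$ and $g(k_0+1)$ are nonzero with opposite signs. Then $|g(k_0) - g(k_0+1)| = |g(k_0)| + |g(k_0+1)|$, and combining the Lipschitz upper bound with the bi-Lipschitz lower bound gives
\[
\frac{|k_0| + |k_0+1|}{\norm f_{\Lip}} \le |g(k_0)| + |g(k_0+1)| \le \norm{f^{-1}}_{\Lip},
\]
hence $|k_0| + |k_0+1| \le C$. So every sign change of $g$ lies in a window of $k_0$'s of length about $C$ near $0$, and outside this window the sign of $g$ is locked.

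Consequently there exist $\epsilon_{\pm} \in \{-, +\}$ with $g(k) = \epsilon_+$ for all sufficiently large positive $k$ and $g(k) = \epsilon_-$ for all sufficiently large negative $k$. Since $A_x$ and its complement $f([x+1, +\infty))$ are both infinite, each of the sets $\{k : g(k) < 0\}$ and $\{k : g(k) > 0\}$ is infinite; this forces $\epsilon_- \ne \epsilon_+$. The choice $(\epsilon_-, \epsilon_+) = (-, +)$ corresponds to the first alternative of the lemma ($A_x$ being a lower half-line up to bounded uncertainty near $f(x)$), while $(\epsilon_-, \epsilon_+) = (+, -)$ corresponds to the second.

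The last thing to do is to translate the integer bound $|k_0| + |k_0+1| \le C$ into the explicit floor/ceiling thresholds $\lfloor f(x) \pm C/2 \rfloor$ and $\lceil f(x) \pm C/2 \rceil$ that appear in the statement. I expect this to be the most fiddly step, particularly when $C$ is not an integer, since one must separate the cases $k_0 \ge 0$ and $k_0 \le -1$ and verify each of the four inclusions by hand. All the conceptual content sits in the switch bound above; what remains is careful arithmetic.
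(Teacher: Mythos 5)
Your proof is correct and takes essentially the same route as the paper: your function $g(k)=f^{-1}(f(x)+k)-x$ is a recentred copy of $f^{-1}$, a sign change of $g$ at $k_{0}$ is exactly one of the paper's ``$\bullet\circ$''\,/\,``$\circ\bullet$'' boundary points $y=f(x)+k_{0}$ of the image set, and the two estimates you combine ($\left|g(k)\right|\ge\left|k\right|/\norm f_{\Lip}$ from the Lipschitz bound on $f$, and $\left|g(k_{0})-g(k_{0}+1)\right|\le\norm{f^{-1}}_{\Lip}$ from that on $f^{-1}$) are precisely the inequalities the paper adds to confine all boundary points to $\left[f(x)-C/2,\,f(x)+C/2\right]$, after which both arguments conclude identically from the infinitude of the image and of its complement. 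The floor/ceiling bookkeeping you defer does go through (a sign change at $k_{0}\ge0$ forces $2k_{0}+1\le C$, hence $k_{0}\le\left\lfloor C/2\right\rfloor$, and symmetrically for $k_{0}\le-1$), and the only point worth adding --- which the paper's own proof also leaves implicit --- is that the \emph{same} alternative occurs for every $x$, which is immediate since the sets $f\left(\left(-\infty,x\right]\right)$ for different $x$ differ by finitely many points.
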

\begin{proof}
Let $y\neq f\left(x\right)$ be such that $y\in f\left(\left(-\infty,x\right]\right)$
and $y+1\notin f\left(\left(-\infty,x\right]\right)$ (i.e. $y$ is
the position of a ``$\bullet\circ$'' on the ``picture''). Then
since $y\in f\left(\left(-\infty,x\right]\right)$, it follows that
$f^{-1}\left(y\right)<x$. In the same way since $y+1\notin f\left(\left(-\infty,x\right]\right)$,
we have $f^{-1}\left(y+1\right)>x$. From the Lipschitz property of
$f$ it follows that
\[
x-f^{-1}\left(y\right)\ge\frac{\left|f\left(x\right)-y\right|}{\norm f_{\Lip}},
\]
\[
f^{-1}\left(y+1\right)-x\ge\frac{\left|f\left(x\right)-y-1\right|}{\norm f_{\Lip}}.
\]
Therefore,
\[
f^{-1}\left(y+1\right)-f^{-1}\left(y\right)\ge\frac{2\left|f\left(x\right)-y-\frac{1}{2}\right|}{\norm f_{\Lip}}.
\]
Now from the Lipschitz property of $f^{-1}$ it follows that
\[
1=\left(y+1\right)-y\ge\frac{f^{-1}\left(y+1\right)-f^{-1}\left(y\right)}{\norm{f^{-1}}_{\Lip}}\ge\frac{2\left|f\left(x\right)-y-\frac{1}{2}\right|}{\norm f_{\Lip}\norm{f^{-1}}_{\Lip}}.
\]
In other words, the distance between $f\left(x\right)$ and any ``$\bullet\circ$''
is bounded by $\frac{1}{2}C$.

The same argument also applies to ``$\circ\bullet$'': just replace
$y+1$ by $y-1$ everywhere.

This proves that the characteristic function of the set $f\left(\left(-\infty,x\right]\right)$
does not change outside the region
\[
\left[f\left(x\right)-\frac{1}{2}C,f\left(x\right)+\frac{1}{2}C\right].
\]
Now since both $f\left(\left(-\infty,x\right]\right)$ and its complement
$\ZZ\setminus f\left(\left(-\infty,x\right]\right)=f\left(\left[x+1,+\infty\right)\right)$
must be infinite, only two possibilities are left: either $f\left(\left(-\infty,x\right]\right)$ is unbounded
from below or it is unbounded from above, which obviously corresponds
to the two possible conclusions of the lemma.\end{proof}
\begin{rem}
Actually, by a little more careful application of the same argument
one can show that the width of the region where the characteristic
function of $f\left(\left(-\infty,x\right]\right)$ is nonconstant
is bounded by $\frac{1}{2}C$.\end{rem}
\begin{proof}
[Proof of Theorem \ref{thm:main}] Let's assume that the images in Lemma \ref{lem:main} are
unbounded from below (the other case can be treated analogously).
Let $x_{1},x_{2}\in\mathbb{Z}$ be such that $x_{2}-x_{1}>C$. Then
\[
f\left(\left(x_{1},x_{2}\right]\right)=f\left(\left(-\infty,x_{2}\right]\right)\setminus f\left(\left(-\infty,x_{1}\right]\right)\subset
\]
\[
\subset\left(-\infty,f\left(x_{2}\right)+C/2\right]\setminus\left(-\infty,f\left(x_{1}\right)-C/2\right]=
\]
\[
=\left(f\left(x_{1}\right)-C/2,f\left(x_{2}\right)+C/2\right].
\]
In the same way
\[
f\left(\left(x_{1},x_{2}\right]\right)\supset\left(f\left(x_{1}\right)+C/2,f\left(x_{2}\right)-C/2\right].
\]
Since $f$ is a bijection, the cardinality of $f\left(\left(x_{1},x_{2}\right]\right)$
must be $x_{2}-x_{1}$. Therefore,
\[
f\left(x_{2}\right)-f\left(x_{1}\right)-C\le x_{2}-x_{1}\le f\left(x_{2}\right)-f\left(x_{1}\right)+C.
\]
Now if we fix $x_{1}<0$ and vary $x_{2}$, we see that for $x$ in the interval $\left[x_1, +\infty\right)$
\[
\left| f\left(x\right) - x - \const_{x_1} \right| \le C.
\]
Note that $x_{1}$ can be arbitrary and the range of possible values of $\const_{x_1}$ is bounded independently of $x_1$
(e.g. $\left| \const_{x_1} \right| \le \left|f(0)\right| + C$),
therefore the bound holds on the whole $\ZZ$.\end{proof}

\section{Corollaries}

As pointed out by the referee, our result implies that there is a remarkable difference between $\ZZ$ and higher-dimensional
lattices in terms of the group-theoretic properties of the action of bi-Lipschitz bijections. In particular:

\begin{cor} \label{cor:propT}
 The group of bi-Lipschitz bijections of $\ZZ$ does not contain an infinite countable subgroup with property (T).
\end{cor}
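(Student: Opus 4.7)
The plan is to combine Theorem \ref{thm:main} with the amenability of the subgroup appearing in \cite{JM}: use Theorem \ref{thm:main} to extract an orientation homomorphism $G\to\ZZ/2\ZZ$ whose kernel is exactly the group of \cite{JM}, and then conclude via the classical fact that a discrete countable group which is simultaneously amenable and has property (T) must be finite. Here $G$ denotes the group of bi-Lipschitz bijections of $\ZZ$.

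First I would extract the orientation homomorphism $\sigma:G\to\ZZ/2\ZZ$. Theorem \ref{thm:main} writes every $f\in G$ in the form $f(x)=\pm x+\const+r(x)$ with $|r(x)|\le\norm{f}_{\Lip}\norm{f^{-1}}_{\Lip}$. The two cases are mutually exclusive (the difference of the two affine models is $2x$, which is unbounded), so the sign $\sigma(f)\in\{+1,-1\}$ is well defined. A short computation shows that if $f,g\in G$ are approximated by $\epsilon_f x+c_f$ and $\epsilon_g x+c_g$ respectively, then $f\circ g$ is approximated by $(\epsilon_f\epsilon_g)x+(\epsilon_f c_g+c_f)$ up to a bounded remainder (bounded + Lipschitz-image of bounded is bounded), so $\sigma(fg)=\sigma(f)\sigma(g)$. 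The kernel $G^+:=\ker\sigma$ has index at most $2$ in $G$ and coincides with the group of bi-Lipschitz bijections at bounded $L_\infty$-distance from the identity, i.e.\ the group studied in \cite{JM}.

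Next, assuming for contradiction that $\Gamma\le G$ is an infinite countable subgroup with property (T), I would intersect with $G^+$. The subgroup $\Gamma^+:=\Gamma\cap G^+$ has index at most $2$ in $\Gamma$, so it is still countable and infinite, and inherits property (T) from $\Gamma$. On the other hand, Theorem \ref{thm:main} implies every $f\in G^+$ has bounded displacement, so $\ZZ$ partitions into the finitely many nonempty level sets $A_c=\{x\in\ZZ:f(x)-x=c\}$, on each of which $f$ acts as the translation $x\mapsto x+c$; this exhibits $G^+$ as a group of piecewise translations of $\ZZ$, which is amenable by \cite{JM}. Since amenability passes to subgroups, $\Gamma^+$ is amenable, and I would then invoke the standard fact that a countable discrete group which is both amenable and has property (T) must be finite, contradicting the infiniteness of $\Gamma^+$.

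The main obstacle I foresee is in making the invocation of \cite{JM} fully rigorous: if amenability of $G^+$ itself is not stated there in the exact form needed, one would have to realise $G^+$ as a subgroup of a group covered by the amenability theorem of Juschenko--Monod (for instance the topological full group of a suitable Cantor minimal $\ZZ$-system obtained by restricting to the closure in $\beta\ZZ$ of a minimal shift orbit), and then use that amenability descends to subgroups.
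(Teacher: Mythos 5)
The first half of your argument is sound and is exactly the reduction the paper performs: Theorem \ref{thm:main} yields the sign homomorphism $\sigma\colon G\to\ZZ/2\ZZ$, its kernel $G^{+}$ is precisely the wobbling group of $\ZZ$ (every bijection of $\ZZ$ with bounded displacement is automatically bi-Lipschitz, so the two descriptions agree), and an infinite countable property (T) subgroup of $G$ would intersect $G^{+}$ in an infinite countable property (T) subgroup of the wobbling group.

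The second half has a genuine gap: the wobbling group of $\ZZ$ is \emph{not} amenable. You are right that every $f\in G^{+}$ is a piecewise translation with finitely many pieces, but the group of \emph{all} such bijections -- which is exactly $G^{+}$ -- contains nonabelian free subgroups; this goes back to van Douwen, who constructed a free group of permutations of $\ZZ$ each of bounded displacement. Indeed, the whole point of \cite{JS} (and the reason that paper exists) is that the wobbling group of $\ZZ$ is non-amenable as a group while its \emph{action} on $\ZZ$ still admits an invariant mean. The amenability theorem of \cite{JM} covers only topological full groups of Cantor \emph{minimal} systems, i.e.\ piecewise translations whose pieces are determined by local patterns of a minimal subshift; restricted to an orbit this gives a countable subgroup of the wobbling group, so your proposed fix of embedding the uncountable, non-amenable $G^{+}$ into such a group cannot succeed (a non-amenable group does not embed in an amenable one). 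Consequently the step ``$\Gamma^{+}$ is amenable and has property (T), hence finite'' collapses. What is needed in its place is Theorem 4.1 of \cite{JS}, which shows directly -- via the invariant mean on $\ZZ$ for the wobbling action combined with a spectral-gap argument, not via amenability of the group -- that the wobbling group of $\ZZ$ contains no infinite countable property (T) subgroup. With that citation substituted for the amenability claim, your index-two reduction completes the proof exactly as the paper does.
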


\begin{proof}
 The fact that the wobbling group of $\ZZ$ -- i.e. the group of bijections that have finite $\ell^\infty$ distance from the identity --
 does not contain a countable property (T) subgroup follows from Theorem 4.1 in \cite{JS}. On the other hand, by our result, the wobbling
 group of $\ZZ$ is an index $2$ subgroup of the group of bi-Lipschitz bijections.
\end{proof}

Note that Corollary \ref{cor:propT} fails for $\ZZ^d, d \ge 3$, since $\operatorname{SL}(d, \ZZ), d \ge 3$ has property (T) and
acts faithfully on $\ZZ^d$ by bi-Lipschitz bijections. We do not know what happens in the $d=2$ case.

\begin{question}
 Does Corollary \ref{cor:propT} hold for $\ZZ^2$?
\end{question}

Another corollary concerns an amenability-like property:

\begin{cor} \label{cor:invMean}
 There is a bi-Lipschitz invariant mean (i.e. finitely additive probability measure) on $\ZZ$.
\end{cor}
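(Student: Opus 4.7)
The plan is to use Theorem \ref{thm:main} to decompose the group of bi-Lipschitz bijections and then construct a mean by F\o{}lner-type averaging tailored to both components of that decomposition. By Theorem \ref{thm:main}, every bi-Lipschitz bijection $f$ of $\ZZ$ is within bounded $L_{\infty}$ distance either of the identity or of the reflection $\sigma\colon x\mapsto -x$. In the first case $f$ belongs to the wobbling group $W$; in the second, $f\sigma\in W$, so $f=w\sigma$ for some $w\in W$. Hence the full group of bi-Lipschitz bijections equals $W\cup W\sigma$, and it suffices to produce a single mean on $\ZZ$ that is simultaneously $W$-invariant and $\sigma$-invariant.

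To build such a mean, I set $F_{n}:=\{-n,-n+1,\ldots,n\}$, let $\mu_{n}$ be the uniform probability measure on $F_{n}$ viewed as an element of $\ell^{\infty}(\ZZ)^{*}$, and extract (by Banach--Alaoglu) a weak-$*$ limit point $\mu$. Since each $\mu_{n}$ is a positive functional of total mass $1$, so is $\mu$, which therefore qualifies as a finitely additive probability measure on $\ZZ$. Invariance of $\mu$ under $\sigma$ is automatic because each $F_{n}$ is symmetric about the origin, so each $\mu_{n}$ is $\sigma$-invariant.

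Invariance under an arbitrary $w\in W$ with $\norm{w-\operatorname{id}}_{\infty}\le M$ follows from a boundary estimate: the symmetric difference $w(F_{n})\triangle F_{n}$ is contained in the $M$-neighborhood of $\{-n,n\}$, so it has at most $4M+2$ elements, while $|F_{n}|=2n+1$. Consequently, for every $\phi\in\ell^{\infty}(\ZZ)$,
\[
\bigl|\mu_{n}(\phi\circ w)-\mu_{n}(\phi)\bigr|\le\frac{4M+2}{2n+1}\,\norm{\phi}_{\infty}\to 0,
\]
so $\mu(\phi\circ w)=\mu(\phi)$, and together with $\sigma$-invariance this gives invariance under all of $W\cup W\sigma$. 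The only delicate point is that the displacement bound $M$ must be uniform in $n$, which is exactly what the wobbling hypothesis supplies; without that uniformity the averaging would fail, and this is precisely the reason why the corollary is stated for $\ZZ$ rather than for a general setting in which the rigidity of Theorem \ref{thm:main} is absent.
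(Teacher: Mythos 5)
Your proof is correct and follows essentially the same route as the paper: the symmetric intervals $[-n,n]$ form a F\o lner sequence for the action, and an invariant mean is extracted as a weak-$\ast$ cluster point of the uniform measures on them. The only cosmetic difference is that you derive the F\o lner property from Theorem \ref{thm:main} via the decomposition of the bi-Lipschitz group into the wobbling group and its reflection coset, whereas the paper invokes Lemma \ref{lem:main} directly; the resulting boundary estimate is the same.
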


\begin{proof}
 From Lemma \ref{lem:main} it follows that the sets $A_n := [-n, n]$ form a F\o lner sequence for the action of bi-Lipschitz bijections -- i.e.
 for any particular bi-Lipschitz bijection $f$ we have
 \[
  \frac{\left|f(A_n) \cap A_n\right|}{\left| A_n\right|} \to 1, n \to \infty
 \]
 Therefore, an invariant mean can be obtained by a standard argument, as a limiting point of the sequence of uniform measures on $A_n$
 with respect to the weak-$\ast$ topology of $(\ell^\infty)^\ast$.
\end{proof}

On the other hand:

\begin{prop}
 Corollary \ref{cor:invMean} fails for $\ZZ^2$.
\end{prop}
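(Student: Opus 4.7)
My plan is to show that $\ZZ^2$ is paradoxical under the action of the group $G$ of bi-Lipschitz bijections and then invoke Tarski's theorem to rule out an invariant mean. The key point is that $G$ contains the integer affine group $\operatorname{Aff}(\ZZ^2) = \ZZ^2 \rtimes \operatorname{SL}(2,\ZZ)$ (every map $x \mapsto Ax+u$ with $A \in \operatorname{SL}(2,\ZZ)$, $u \in \ZZ^2$ is bi-Lipschitz since $A$ and $A^{-1}$ are integral), and $\operatorname{SL}(2,\ZZ)$ contains a non-abelian free subgroup by Sanov's theorem, e.g.\ $F_2 = \langle A, B\rangle$ with $A = \bigl(\begin{smallmatrix}1 & 2\\ 0 & 1\end{smallmatrix}\bigr)$ and $B = \bigl(\begin{smallmatrix}1 & 0\\ 2 & 1\end{smallmatrix}\bigr)$. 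If I can realize a \emph{free} action of $F_2$ on $\ZZ^2$ by elements of $\operatorname{Aff}(\ZZ^2)$, then Tarski's paradoxical decomposition of $F_2$ (the pieces $W_A, W_{A^{-1}}, W_B, W_{B^{-1}}, \{e\}$ of reduced words, together with $F_2 = W_A \sqcup A\cdot W_{A^{-1}} = W_B \sqcup B\cdot W_{B^{-1}}$) transports through any transversal for $F_2$-orbits to a $G$-paradoxical decomposition of $\ZZ^2$, and hence no $G$-invariant mean can exist.

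The pure linear $F_2$ fixes the origin and, for each parabolic word, fixes a whole rational line, so I would twist by translations: consider $\widetilde A(x) := Ax + u$ and $\widetilde B(x) := Bx + v$ for $u, v \in \ZZ^2$ to be chosen. Since $A,B$ freely generate, so do $\widetilde A, \widetilde B$, and for any non-trivial reduced word one has $w(\widetilde A, \widetilde B)(x) = M_w x + c_w$, where $M_w = w(A,B) \ne I$ and $c_w \in \ZZ^2$ depends $\ZZ$-linearly on $(u,v)$. Fixed points in $\RR^2$ form the (possibly empty) affine solution set of $(I - M_w)x = c_w$: a single rational point when $M_w$ is hyperbolic ($\det(I-M_w) \ne 0$), and either empty or an affine line when $M_w$ is parabolic. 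I would argue that a suitable $(u,v) \in \ZZ^4$ makes every such fixed set disjoint from $\ZZ^2$; as there are countably many words and each ``bad'' $(u,v)$ lies in a proper affine sublattice of $\ZZ^4$, a standard diagonalization yields such $(u,v)$.

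The main obstacle is exactly this parabolic case: the fixed set is then a whole affine line, and one must simultaneously keep all such lines (one per parabolic word) free of integer points. The advantage is that the line's direction is determined by $M_w$ while its intercept depends linearly on $(u,v)$, so a generic integer perturbation of $(u,v)$ shifts every intercept off the lattice. Once the free action is produced, the $F_2$-paradox pushes forward as described, contradicting the existence of an invariant mean and proving the Proposition. A remark for completeness: this also contrasts with Corollary \ref{cor:invMean}, because the sets $A_n = [-n,n]^2$ are \emph{not} F\o lner for affine bi-Lipschitz bijections --- applying a hyperbolic $A \in \operatorname{SL}(2,\ZZ)$ to $A_n$ produces a parallelogram of bounded overlap density with $A_n$.
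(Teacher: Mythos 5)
Your overall strategy (exhibit a paradoxical decomposition of $\ZZ^2$ under the bi-Lipschitz group and invoke Tarski's theorem) is legitimate and different from the paper's, which instead restricts a putative invariant mean to $\ZZ^2\setminus\{0\}$ and appeals to the fact that a nonamenable group ($\operatorname{SL}(2,\ZZ)$) acting with amenable (cyclic parabolic) stabilizers admits no invariant mean. But your execution has a genuine gap at its central step: the existence of $(u,v)$ making the twisted action free. The claim that ``each bad $(u,v)$ lies in a proper affine sublattice of $\ZZ^4$, so a standard diagonalization yields a good $(u,v)$'' is not a valid argument over $\ZZ$: countably many proper affine sublattices can cover all of $\ZZ^4$ (e.g.\ the hyperplanes $u_1=n$, $n\in\ZZ$), so unlike the Baire/measure setting over $\RR$ there is no free lunch. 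Worse, for a fixed word $w$ the bad set is not one sublattice but the full preimage $\{(u,v): c_w(u,v)\in (I-M_w)\ZZ^2\}$, a union of cosets of \emph{positive density} roughly $1/\lvert 2-\operatorname{tr}M_w\rvert$; one only needs one condition per primitive conjugacy class (conjugates and powers inherit non-integrality of the fixed point), but the number of primitive hyperbolic classes of trace about $T$ grows fast enough that the sum of these densities diverges, so even a union bound fails. Whether a free affine $F_2$-action on $\ZZ^2$ exists at all is therefore left genuinely open by your argument, and it is not the easy genericity statement you suggest.

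The good news is that you do not need a free action. The untwisted Sanov group $F_2=\langle A,B\rangle\le\operatorname{SL}(2,\ZZ)$ already acts on $\ZZ^2\setminus\{0\}$ with \emph{commutative} point stabilizers: the stabilizer of $x\neq 0$ consists of the elements fixing the line $\RR x$ pointwise, a subgroup of an infinite cyclic parabolic group. By the locally commutative form of the paradoxicality criterion (Wagon, \emph{The Banach--Tarski Paradox}, Thm.\ 4.5), a locally commutative action of $F_2$ on a set makes that set $F_2$-paradoxical. Since any bi-Lipschitz-invariant mean $\mu$ on $\ZZ^2$ has $\mu(\{0\})=0$ (translation invariance plus finite additivity), it would restrict to an $F_2$-invariant mean on $\ZZ^2\setminus\{0\}$, a contradiction. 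This repairs your proof while keeping its ``paradoxical decomposition'' flavor; the paper's argument reaches the same contradiction more quickly by quoting the amenable-stabilizers criterion instead of constructing a decomposition.
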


\begin{proof}
 Let $\mu$ be a bi-Lipschitz invariant mean on $\ZZ^2$. Then the standard action of $\operatorname{SL}(2, \ZZ)$ on
 $\ZZ^2 \setminus \{0\}$ preserves the mean $\mu$ restricted to $\ZZ \setminus \{0\}$. This is impossible, since
 $\operatorname{SL}(2, \ZZ)$ is nonamenable and acts on $\ZZ^2 \setminus \{0\}$ with amenable stabilizers.
\end{proof}

\section{Acknowledgements}

The authors wish to thank the referee for suggesting Corollaries \ref{cor:propT} and \ref{cor:invMean} and the surrounding discussion.

\end{document}